\journal{European Journal of Control}
\def\QEDhereeqn{\eqno\let\eqno\relax\let\leqno\relax\let\veqno\relax\hbox{\QED}}
\def\QEDopenhereeqn{\eqno\let\eqno\relax\let\leqno\relax\let\veqno\relax\hbox{\QEDopen}}
\newcommand{\bs}{\boldsymbol}
\renewcommand{\emph}{\textit}
\newcommand{\0}{\bs 0}
\def\1{{\bs 1}}
\def\argmin{\mathop{\rm argmin}}
\def\min{\mathop{\rm min}}
\newcommand{\proj}{\mathrm{proj}}
\def\R{\mathbb{R}}
\DeclareSymbolFontAlphabet{\mathbbm}{bbold}
\DeclareSymbolFontAlphabet{\mathbb}{AMSb}%
\newcommand\tsup[2][2]{%
	\def\useanchorwidth{T}%
	\ifnum#1>1%
	\stackon[-.5pt]{\tsup[\numexpr#1-1\relax]{#2}}{\scriptscriptstyle\sim}%
	\else%
	\stackon[.5pt]{#2}{\scriptscriptstyle\sim}%
	\fi%
}
\newacronym{KKT}{KKT}{Karush--Kuhn--Tucker}
\newacronym{ADMM}{ADMM}{alternating direction method of multipliers}
\newacronym{OPF}{OPF}{optimal power flow}
\newacronym{OPFP}{OPFP}{optimal power flow problem}
\newacronym{NUM}{NUM}{network utility maximization}
\newacronym{LMI}{LMI}{linear matrix inequality}
\newacronym{BMI}{BMI}{bilinear matrix inequality}
\newacronym{LM}{LM}{Lyapunov-Metzler}
\newacronym{SDP}{SDP}{semidefinite programming}
\newacronym{LTI}{LTI}{linear time invariant}
\newacronym{MJLS}{MJLS}{Markov jump linear system}
\newacronym{PID}{PID}{proportional-integral-derivative}
\newacronym{PPA}{PPA}{proximal-point algorithm}
\newacronym{PPPA}{PPPA}{preconditioned proximal-point algorithm}
\newacronym{PPP}{PPP}{preconditioned proximal-point}
\newacronym{NE}{NE}{Nash equilibrium}
\newacronym{GNE}{GNE}{generalized Nash equilibrium}
\newacronym{v-GNE}{v-GNE}{variational GNE}
\newacronym{ISS}{ISS}{input-to-state stability}
\newacronym{OFO}{OFO}{Online Feedback Optimization}
\let\cl@part\relax \makeatother
\crefname{thm}{Theorem}{Theorems}
\crefname{lem}{Lemma}{Lemmas}
\crefname{cor}{Corollary}{Corollaries}
\crefname{rem}{Remark}{Remarks}
\crefname{alg}{Algorithm}{Algorithm}
\crefname{figure}{Figure}{Figures}
\crefname{assumption}{Assumption}{Assumptions}
\crefname{standing}{Standing Assumption}{Standing Assumption}
\crefname{corollary}{Corollary}{Corollaries}
\crefname{proposition}{Proposition}{Propositions}
\crefname{problem}{Problem}{Problems}
\crefname{example}{Example}{Examples}
\crefname{definition}{Definition}{Definition}
\crefname{condition}{C}{C}
\crefname{thmlisti}{Theorem}{Theorem}
\crefname{lemlisti}{Lemma}{Lemma}
\crefname{asmlisti}{Assumption}{Assumptions}
\newlist{thmlist}{enumerate}{1}
\setlist[thmlist]{label=(\roman{thmlisti}), ref=\thethm(\roman{thmlisti}),noitemsep}
\newlist{lemlist}{enumerate}{1}
\setlist[lemlist]{label=(\roman{lemlisti}), ref=\thelem(\roman{lemlisti}),noitemsep}
\newlist{asmlist}{enumerate}{1}
\setlist[asmlist]{label=(\roman{asmlisti}), ref=\theassumption(\roman{asmlisti}),noitemsep,nosep,leftmargin=*} 
\newtheorem{lemma}{Lemma}
\newtheorem{theorem}{Theorem}
\newtheorem{assumption}{Assumption}
\newtheorem{corollary}{Corollary}
\newtheorem{proposition}{Proposition}
\patchcmd{\smallmatrix}{\thickspace}{\kern.5em}{}{}
\newenvironment{smallbmatrix}
{ \left[\begin{smallmatrix}}
	{\end{smallmatrix}\right]}
\begin{document}

\begin{frontmatter}

\title{A Stability Condition for Online Feedback Optimization\\without Timescale Separation}


\author[1]{Mattia Bianchi\corref{cor1}}
\ead{mbianchi@ethz.ch}

\author[1]{Florian D\"orfler}
\ead{dorfler@ethz.ch}

\cortext[cor1]{Corresponding author}

\affiliation[1]{organization={Automatic Control Laboratory, ETH Zürich},country={Switzerland}}

\begin{abstract}
    \gls{OFO} is a  control approach to drive a dynamical plant to an optimal steady state.
    By interconnecting optimization algorithms with real-time plant measurements, 
    \gls{OFO} provides all  the  benefits of feedback control, yet without requiring exact knowledge of plant dynamics for computing a setpoint. On the downside, existing stability guarantees for \gls{OFO} require the controller to evolve on a sufficiently slower timescale than the plant, possibly affecting transient performance and responsiveness to disturbances. In this paper, we prove that, under suitable conditions, \gls{OFO} ensures stability without any timescale separation. In particular, the condition we propose is independent of the time constant of the plant, hence it is scaling-invariant. Our analysis leverages a composite Lyapunov function, which is the $\max$ of plant-related  and controller-related components. 
    We corroborate our theoretical results with   numerical examples. 
\end{abstract}

\begin{keyword}
online feedback optimization  \sep timescale separation   \sep  optimization algorithms  \sep extremum seeking
\end{keyword}

\end{frontmatter}

\section{Introduction}
\glsreset{OFO}\gls{OFO} \cite{Hauswirth2021} is an emerging control paradigm to steer a plant to an efficient equilibrium,  unknown a priori and implicitly defined by an optimization problem. 
In \gls{OFO}, optimization algorithms are used as dynamic feedback controllers, by feeding them with real-time plant measurements. This brings several advantages with respect to feedforward optimization, where plant setpoints are  (periodically) computed offline. In particular, \gls{OFO} showcases superior robustness to model uncertainty and unmeasured disturbances, and it can adapt to unforeseen changes in the plant or cost function. For these reasons, \gls{OFO} has found application in several domains, most notably power
systems (e.g., for frequency regulation  \cite{simpson2020stability,chen2020distributed}
or optimal power flow \cite{Tang2017,Emiliano2016}), but also  process control \cite{ZAGOROWSKA2023119}, traffic
control \cite{bianchin2021time}, communication networks \cite{wang2011control}, even being employed
in industrial setups \cite{Ortmann_Deployment_2023}. 

With regards to the current state-of-the-art, three main limitations emerge for \gls{OFO} methods. 
The first is that  implementing feedback optimization controllers still requires some modeling of the plant (in contrast, for instance, with extremum seeking), in the form of the input-output sensitivity function. This issue is addressed in \cite{he2022,picallo2022adaptive}, by estimating the sensitivity online. The second restriction is  the difficulty of enforcing output constraints. While some approaches ensure  asymptotic satisfaction, by linearizing  \cite{Haberleverena2021} or dualizing \cite{Bernstein2019} the constraints, the work \cite{colot2024optimalpowerflowpursuit} seems to be the first to guarantee transient safety. 
The third limitation is that stability guarantees for \gls{OFO} schemes assume timescale separation, i.e., that the physical plant evolves on a faster timescale than the optimization algorithm/controller. The latter issue is the focus of this paper. 

In particular, most works in the field assume that the plant is stationary, i.e., they identify  the plant with its steady-state map \cite{Hauswirth2021,colot2024optimalpowerflowpursuit}. This is a good approximation for extremely  fast plants (e.g., for frequency regulation in power grids). Otherwise, one has to take into account that the dynamics of the controller can interfere with the plant dynamics, as it was done in \cite{menta2018stability,Hauswirthadrian2021,Belgioioso_FES_2024}, \cite[§3.B]{Colombino2020}. In these papers, stability of the closed-loop system is proven by enforcing that the controller is sufficiently slower (i.e., has a much larger time constant) than the plant, as in classical singular perturbation arguments. In practice, this means updating the dynamic controller (namely, the optimization routine) at a very slow rate, or with a small stepsize. Indeed,  in some cases a fast controller could lead to instability \cite{Hauswirthadrian2021,Belgioioso_FES_2024}.

On the other hand, in most setups it would be desirable to operate the controller on the same timescale of the plant \cite{Picallo2023}, to improve transient performance and reduce settling times, especially in problems with high temporal variability. The questions to answer are thus how and under which conditions can stability be guaranteed without timescale separation,  by designing  new  \gls{OFO} schemes or via novel analysis methods, respectively.  Here we take the second route. 

\emph{Contributions}: In this paper, we show that
timescale separation is not always necessary to guarantee stability with \gls{OFO} controllers. Specifically, we derive a stability condition that does not require the controller to be slower (nor faster) than the plant, and that ensures exponential stability for the closed-loop system, for any choice of the control gain. We further provide sufficient conditions for our stability criterion, and we show that they can be  enforced by adding sufficient regularization to the objective function of the optimization problem (at the cost of suboptimality). 

Our stability condition is conceptually related to the (block) diagonal dominance  of the Jacobian of the closed-loop, and our analysis is based on a $\max$ Lyapunov function, reminiscent of arguments used in
asynchronous iterations \cite{Bertsekas_Parallel}, switched systems stability \cite{Hu_TAC2008}, distributed algorithms \cite{Nguyen2023}, contraction theory \cite{Davydov_NonEuclideanContraction_2022}---but with different goals. Here we use a  $\max$-type Lyapunov function to avoid timescale separation, and we further provide a general result  which allows for arbitrary Lyapunov functions for the subsystems (see \Cref{lem:fundamental}). 
For ease of reading, we tailor our analysis to an \gls{OFO} controller based on the simplest continuous-time gradient flow, although our approach can be  generalized in several directions, and it is not limited to \gls{OFO}. 

The paper is organized as follows. In \Cref{sec:problemstatement}, we review \gls{OFO} controllers. \Cref{sec:mainresults} introduces our main condition and stability theorem. In \Cref{sec:constrained}, we extend the result to the input-constrained case. \Cref{sec:numerics}  illustrates our results via numerical examples. Finally, in \Cref{sec:conclusion}, we discuss possible extensions and directions for future research.





\subsection{Notation and preliminaries}
For a differentiable map $f:\R^n \rightarrow \R^m$, 
$\nabla f(x) \in \R^{m\times n}$ denotes its Jacobian, i.e.,  the matrix of partial derivatives of $f$ computed at $x\in \R^n$. If $f$ is scalar (i.e., $m=1$), we also denote by $\nabla f:\R^n \rightarrow \R^n $ its gradient, with some abuse of notation but no ambiguity. If $x'$ is a subset of variables, then $\nabla_{x'} f$ denotes its Jacobian/gradient with respect to $x'$.
We say that a function $V:\R_{\geq 0}\rightarrow \R$ is differentiable if it is differentiable on the open set $\R_{>0}$; $\dot V(t)$ denotes its derivative computed at $t$. Given a dynamical system $\dot x(t) = f(x)$, if $V:\R^n \rightarrow\R$, then $\dot V(x(t)) \coloneqq \nabla V (x(t))^\top f(x(t))$ denotes its Lie derivative along the vector field $f$.  The (right) Dini derivative of a function $V: \R_{\geq 0} \rightarrow \R$ is defined as
\begin{align}
	D^+ V(t) = \underset{\delta \rightarrow 0^+ }{\operatorname{lim \ sup}} \ \frac{V(t+ \delta)-V(t)}{\delta}.
\end{align}
\begin{lemma}[Danskin’s lemma \cite {Davydov_NonEuclideanContraction_2022}]\label{lem:Danskin}
	For some differentiable functions $V_i:\R_{\geq 0} \rightarrow \R$, $i =1,\dots, N$, let $V = \max\limits_{i \in \{ 1,\dots,N\}} \{V_i\}$ . Then, for all $t >0$, 
	\begin{align}
		D^+ V(t) =  \max_{i \in \{ 1,\dots,N\}} \{ \dot V_i(t) \mid V(t) = V_i(t) \}.
	\end{align}
\end{lemma}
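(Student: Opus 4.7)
The plan is a two-sided estimate of the limsup. Define the active index set $\mc{A}(t) := \{i \in \{1,\dots,N\} : V(t) = V_i(t)\}$, which is nonempty since the max is attained. I want to show that $D^+V(t)$ equals $\max_{i \in \mc{A}(t)} \dot V_i(t)$ by proving both inequalities. Throughout I use that $V$, being a pointwise maximum of finitely many continuous functions, is itself continuous, and that each $V_i$ is differentiable at $t>0$.

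For the lower bound, I would fix any $i \in \mc{A}(t)$. For every $\delta > 0$ we have $V(t+\delta) \geq V_i(t+\delta)$, and $V(t) = V_i(t)$ by the choice of $i$. Therefore
\begin{align}
\frac{V(t+\delta) - V(t)}{\delta} \;\geq\; \frac{V_i(t+\delta) - V_i(t)}{\delta}.
\end{align}
Taking $\limsup_{\delta \to 0^+}$ and using differentiability of $V_i$ yields $D^+V(t) \geq \dot V_i(t)$. Since $i \in \mc{A}(t)$ was arbitrary, $D^+V(t) \geq \max_{i \in \mc{A}(t)} \dot V_i(t)$.

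For the upper bound, I would pick a sequence $\delta_n \downarrow 0$ realizing the limsup, i.e., $\frac{V(t+\delta_n)-V(t)}{\delta_n} \to D^+V(t)$. For each $n$, choose $i_n$ attaining the max at $t+\delta_n$, so $V(t+\delta_n) = V_{i_n}(t+\delta_n)$. Because the index set is finite, some value $i^\star$ is taken infinitely often; passing to that subsequence (still denoted $\delta_n$), I have $i_n \equiv i^\star$. The key observation is that $i^\star$ must belong to $\mc{A}(t)$: by continuity $V_{i^\star}(t+\delta_n) = V(t+\delta_n) \to V(t)$ and also $V_{i^\star}(t+\delta_n) \to V_{i^\star}(t)$, forcing $V_{i^\star}(t) = V(t)$. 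Combined with $V(t) \geq V_{i^\star}(t)$, this gives
\begin{align}
\frac{V(t+\delta_n) - V(t)}{\delta_n} \;\leq\; \frac{V_{i^\star}(t+\delta_n) - V_{i^\star}(t)}{\delta_n},
\end{align}
and passing to the limit yields $D^+V(t) \leq \dot V_{i^\star}(t) \leq \max_{i \in \mc{A}(t)} \dot V_i(t)$.

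The main obstacle, and the only nonroutine point, is the upper bound: one must identify a single index that both witnesses the limsup of the difference quotient \emph{and} lies in the active set at $t$. Finiteness of the index set (allowing extraction of a constant subsequence) combined with continuity of the $V_i$'s at $t$ is exactly what makes this work, and is where the finite-$N$ assumption enters essentially. Combining the two inequalities yields the claim.
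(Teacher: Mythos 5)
Your proof is correct. The paper itself does not prove this lemma---it is imported verbatim from \cite{Davydov_NonEuclideanContraction_2022}---so there is no in-paper argument to compare against; your two-sided estimate (lower bound from any active index, upper bound by extracting a constant subsequence of maximizing indices and showing it must be active at $t$ by continuity) is the standard and complete way to establish it, and you correctly identify finiteness of the index set as the essential ingredient.
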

\medskip  


\begin{lemma}[{\cite[Lem.~11]{Davydov_NonEuclideanContraction_2022}}]\label{lem:Gronwall}
	Let $V \! : \! \R_{\geq 0} \! \rightarrow \! \! \R$ be continuous. If $D^+ V(t) \! \leq  \!\!  - \tau V(t)$ for almost all $t$, then, for all $t$, 
	\begin{align}
		V(t) \leq e^{-\tau t} V(0). 
	\end{align}
\end{lemma}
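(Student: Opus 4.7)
The plan is to reduce the exponential estimate to a monotonicity statement for a single scalar auxiliary function, and then invoke a classical calculus-type result for functions with one-sided derivatives. Concretely, I introduce $W(t) \coloneqq e^{\tau t} V(t)$, which is continuous on $\R_{\geq 0}$ because $V$ is, and my goal becomes to prove $W(t) \leq W(0)$, since this rewrites as $V(t) \leq e^{-\tau t} V(0)$ after multiplying by $e^{-\tau t}$.

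The first computational step is a product rule for the upper Dini derivative. Since $t \mapsto e^{\tau t}$ is $C^1$ and strictly positive, the standard calculus of Dini derivatives yields
\begin{align*}
    D^+ W(t) = \tau e^{\tau t} V(t) + e^{\tau t} D^+ V(t) = e^{\tau t}\bigl(D^+ V(t) + \tau V(t)\bigr).
\end{align*}
Substituting the hypothesis $D^+ V(t) \leq -\tau V(t)$, which holds for almost every $t$, gives $D^+ W(t) \leq 0$ for almost every $t > 0$. Thus the proof is reduced to the classical assertion: a continuous function $W$ on $\R_{\geq 0}$ with $D^+ W \leq 0$ almost everywhere is non-increasing. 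I would apply this as a black-box monotonicity lemma (standard in the Dini-derivative literature, and essentially the statement \cite[Lem.~11]{Davydov_NonEuclideanContraction_2022} is built upon). If a self-contained argument is desired, a perturbation strategy works: for any $\varepsilon>0$ look at $W_\varepsilon(t) = W(t) + \varepsilon t$, suppose for contradiction that $W_\varepsilon(t_2) > W_\varepsilon(t_1)$ for some $t_2 > t_1$, and use continuity to extract a point $t^\star$ with $D^+ W(t^\star) \geq -\varepsilon$; choosing $t^\star$ outside the null exceptional set contradicts $D^+ W \leq 0$ a.e.\ once $\varepsilon$ is taken to zero.

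The main obstacle I foresee is precisely the passage from \emph{almost everywhere} to \emph{everywhere} monotonicity, since continuity alone is not enough to rule out pathological examples such as the Cantor staircase. This technicality is resolved by the regularity implicitly assumed when $V$ arises as a Lyapunov function along an absolutely continuous trajectory of an ODE (as is the case in the intended application of this lemma within the paper), under which the a.e.\ Dini inequality integrates to a genuine inequality $W(t_2) - W(t_1) \leq 0$. Once monotonicity of $W$ is secured, the conclusion $V(t) \leq e^{-\tau t} V(0)$ is immediate; the remaining steps are routine algebra.
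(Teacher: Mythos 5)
The paper does not actually prove this lemma: it is imported verbatim from the cited reference \cite[Lem.~11]{Davydov_NonEuclideanContraction_2022}, so there is no in-paper argument to compare against. Your integrating-factor strategy --- set $W(t)=e^{\tau t}V(t)$, use the Dini product rule with the $C^1$ positive factor $e^{\tau t}$ to get $D^+W(t)=e^{\tau t}\bigl(D^+V(t)+\tau V(t)\bigr)\leq 0$, and reduce to a monotonicity statement --- is the standard route and is the right reduction.

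The genuine gap is exactly where you suspected it, but your proposed repair does not close it. The classical monotonicity lemma for Dini derivatives requires $D^+W\leq 0$ \emph{everywhere} (or off a countable set); with a merely null exceptional set the statement is false for general continuous functions. Concretely, if $c$ is the Cantor staircase, then $V(t)=e^{-\tau t}\bigl(1+c(t)\bigr)$ is continuous, satisfies $D^+V(t)=-\tau V(t)$ for almost every $t$, yet $V(1)=2e^{-\tau}>e^{-\tau}V(0)$, so the lemma as literally stated needs extra regularity. Your perturbation sketch does not rescue this: the point $t^\star=\sup\{t\in[t_1,t_2]: W_\varepsilon(t)\leq W_\varepsilon(t_1)\}$ produced by the contradiction argument is a single, uncontrollable point, and there is no freedom to ``choose $t^\star$ outside the null exceptional set'' --- that is precisely why the Cantor counterexample survives. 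The correct fix is the one you mention only in passing: in the paper's application $V=\max\{\xi V_x,V_u\}$ is evaluated along a locally Lipschitz solution of \eqref{eq:closedloop}, hence $t\mapsto W(t)$ is locally Lipschitz, therefore absolutely continuous, and an absolutely continuous function with a.e.\ nonpositive derivative is non-increasing because the a.e.\ derivative integrates to the increment. You should promote that observation from a side remark to the actual final step (or, equivalently, strengthen the hypothesis to ``for all $t$ outside a countable set'' and run the sup-point argument); as written, the black-box monotonicity lemma you invoke is not available under the stated a.e.\ hypothesis for general continuous $V$.
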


\medskip

\section{\gls{OFO} and timescale separation} \label{sec:problemstatement}
In this section, we review the main idea of \gls{OFO} and discuss the role of timescale separation for this control approach. 
We consider the dynamical system (or plant)
\begin{subequations}\label{eq:plant}
	\begin{align}
		\dot x & = f(x,u) \\
		y & = g(x),
	\end{align}
\end{subequations}
with state $x\in\R^n$, input $u\in \R^m$, and output $y\in \R^p$. In the following, we postulate  that the plant is ``pre-stabilized'' and that it has a well-behaved steady-state map; this is a fundamental assumption for \gls{OFO}  \cite[Asm.~2.1]{Hauswirthadrian2021}. 

\begin{assumption}\label{asm:plant}
	The mappings $f:\R^n \times \R^m \rightarrow \R^n$ and $g:\R^n \times \R^m \rightarrow \R^p$ are locally Lipschitz. For any constant input $u(t) = u\in \R^m$, the plant \eqref{eq:plant} is globally asymptotically stable; hence,  there are a unique steady-state map $s:\R^m \rightarrow \R^n$ and a steady-state output map $h:\R^m \rightarrow \R^p$ such that
	\begin{align}\label{eq:h}
		f(s(u),u) = 0, \quad  h(u) \coloneqq g(s(u)),
	\end{align}
	for all $u\in\R^m$. Furthermore, $h$ is continuously differentiable and the \emph{sensitivity} $\nabla h:\R^m \rightarrow \R^{p \times m}$ is locally Lipschitz.   
\end{assumption}

The control objective is to steer the plant \eqref{eq:plant} to a solution of the optimization problem 
\begin{subequations}
	\label{eq:opt1}
	\begin{align}
		\min_{u\in\R^m,y\in\R^p} & \quad   \Phi(u,y)
		\\[-0.3em]
		\textnormal{s.t.}   & \quad   y = h(u),
	\end{align}
\end{subequations}
where $\Phi$ is a cost function, and the steady-state constraint  $y = h(u)$ ensures that any solution to \eqref{eq:opt1} is an input-output equilibrium for the plant \eqref{eq:plant}. Equivalently, \eqref{eq:opt1} can be recast as the unconstrained problem
\begin{align}\label{eq:opt2}
	\min_{u\in\R^m} & \quad  \tilde \Phi(u)
\end{align} 
where $\tilde \Phi(u)  \coloneqq \Phi(u,h(u))$.

\begin{assumption}\label{asm:cost}
	The function $\Phi:\R^m\times \R^p \rightarrow \R$ is continuously differentiable, and its gradient  $\nabla \Phi$ is locally Lipschitz. The problem in \eqref{eq:opt1} admits a solution.
\end{assumption}

Existence of solutions to \eqref{eq:opt1} is ensured if $\tilde \Phi$ has compact level sets (e.g., if it is strongly convex) as it was  assumed, e.g, in  \cite{menta2018stability,Hauswirthadrian2021}. Note that the chain rule implies
\begin{align}
	\nabla \tilde \Phi (u)  = \tilde H(u) \nabla \Phi (u,h(u)).
\end{align}
where $\tilde H(u) \coloneqq \begin{bmatrix}
	I_m &  \nabla h(u)^\top
\end{bmatrix}$. Hence, the gradient flow for the unconstrained problem \eqref{eq:opt2} would be 
\begin{align}\label{eq:openloopGradient}
	\dot u = - \alpha \tilde H(u) \nabla \Phi (u,h(u)),
\end{align}
with tuning gain $\alpha >0$. 
\gls{OFO} proposes to replace this flow with a feedback controller, obtained by substituting the steady-state map $h(u)$ with the current measured output of the plant \eqref{eq:plant}, namely 
\begin{align}\label{eq:OFO}
	\dot u = - \alpha \tilde H (u) \nabla \Phi (u,y).
\end{align}
Besides ensuring the usual advantages of feedback control  (e.g.,  in terms of disturbance rejection), implementing \eqref{eq:OFO} does not require knowing $h$, but only the so-called \emph{sensitivity} $\nabla h$, which is easier to estimate online \cite{picallo2022adaptive}, and  it is independent of unknown additive terms affecting $h$ (e.g., a constant disturbance, as for example the unknown demand in power systems applications \cite{colot2024optimalpowerflowpursuit}; see also \Cref{sec:numerics}).

The equilibria of the closed-loop system \eqref{eq:plant}, \eqref{eq:OFO} correspond to the critical points of \eqref{eq:opt2}. 
Informally speaking, if the plant is close to steady state (i.e., $y \approx h(u)$), then  we have $\dot u \approx - \alpha \nabla \tilde \Phi (u)$, which is the standard gradient flow. In fact, stability of the closed-loop can be guaranteed\footnote{Under suitable conditions: for instance, it is usually assumed that the plant \eqref{eq:plant} is exponentially stable for any constant input.}  by making the time constant of the controller much larger than that of the plant, so that the plant is always approximately at steady state  \cite{Hauswirthadrian2021,Colombino2020}, as typical in singular perturbation analysis. 

In practice, this timescale separation is achieved by choosing a small-enough gain $\alpha$ for the controller. Nonetheless,  a small $\alpha$ affects the response speed of the controller \cite{Picallo2023}, for instance resulting in worse transient performance, longer settling time, poor responsiveness to changes in the operation (e.g., for time-varying costs or disturbances). Hence, it would be highly desirable to guarantee closed-loop stability when controller and plant evolve on the same timescale.


\section{Stability without timescale separation} \label{sec:mainresults}
In this section, we show that under suitable conditions the \gls{OFO} controller  \eqref{eq:OFO} is stabilizing for the plant \eqref{eq:plant} without any timescale separation, namely \emph{for any value of} $\alpha$. 

\subsection{Main idea}
We start by providing some intuition on our derivation. Consider the closed-loop system
\begin{align} \label{eq:closedloop}
	\dot \omega = 
	\begin{bmatrix}
		\dot x
		\\
		\dot u
	\end{bmatrix}    = \begin{bmatrix}
		f(x,u)
		\\
		- \alpha \tilde H (u) \nabla \Phi (u,(g(x))
	\end{bmatrix}\coloneqq \mathcal A(x,u).
\end{align}
Let us denote $k(x,u) \coloneqq -\tilde H (u) \nabla \Phi (u,(g(x)) $; under sufficient differentiability, the Jacobian of $\mathcal A$ is
\begin{align*}
	\nabla \mathcal A(x,u)  = \begin{bmatrix}
		\nabla_x f(x,u) & \nabla_u f(x,u)
		\\
		\alpha \nabla_x k(x,u)  &  \alpha \nabla_u k(x,u)
	\end{bmatrix}.
\end{align*}
One way to verify stability is to show that there exists $Q \succ 0$ satisfying the Lyapunov inequality
\begin{align*}
	(\forall x\in\R^n,\forall u\in\R^m) \quad  \nabla \mathcal A(x,u) Q + Q\nabla \mathcal A(x,u) ^\top\prec 0.
\end{align*}
In this case, $V(\omega) = (\omega -\omega^\star)^\top  Q (\omega-\omega^\star)$  provides a Lyapunov function for the closed-loop system, where $\omega^\star = (x^\star ,u^\star)$,  $u^\star$ is a solution to \eqref{eq:opt2}, and $x^\star = s(u^\star)$. However, this stability condition inevitably depends on the value of $\alpha$. An alternative is to look at the diagonal dominance of $\nabla \mathcal A$: if the diagonal terms are negative  and the off-diagonal terms are sufficiently small, it is possible to show  stability via an infinity-norm Lyapunov function  \cite{Davydov_NonEuclideanContraction_2022}. The advantage is that diagonal dominance  does not depend on the value of  $\alpha$, and thus on timescale separation. 

While this condition on the Jacobian would result in very restrictive stability conditions, the diagonal dominance argument inspires the following fundamental result, which is the cornerstone of our analysis.  
\begin{lemma}\label{lem:fundamental}
	Let $\xi>0$, $\alpha>0$ be arbitrary. Assume that   $V_x :\R_{\geq 0}\rightarrow \R_{\geq 0} $ and $V_u :\R_{\geq 0}\rightarrow \R_{\geq 0} $ are  differentiable  nonnegative functions satisfying, for almost all $t\geq 0$,
	\begin{align}
		\dot V_x (t) &  \leq - \mu_1 V_x(t)+ \theta_1 V_u(t)
		\\
		\dot V_u(t) &  \leq \alpha \left( \theta_2 V_x(t) - \mu_2 V_u (t)\right),
	\end{align}
	for some $\mu_1,\mu_2,\theta_1,\theta_2 >0$ such that \begin{align*}
	    -\mu_1 + \xi \theta_1 <0, \qquad \theta_2 - \xi \mu_2 <0.
        \end{align*} Then, $V \coloneqq \max\{\xi V_x,V_u\}$ satisfies 
        \begin{align*}
            V(t) \leq e^{-\tau t} V(0),
        \end{align*}  for all $t\geq 0$, where $\tau = \min\{ \mu_1-\xi\theta_1, \alpha(  \mu_2 - \frac{\theta_2}{\xi})\} >0$. 
\end{lemma}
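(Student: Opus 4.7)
The plan is to apply Danskin's lemma (\Cref{lem:Danskin}) to the two-component maximum $V = \max\{\xi V_x, V_u\}$, derive a differential inequality of the form $D^+ V(t) \leq -\tau V(t)$, and then invoke the Gr\"onwall-type estimate in \Cref{lem:Gronwall}. Since $V$ is the pointwise maximum of two nonnegative differentiable functions, it is continuous, so both lemmas are applicable.

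First I would split the analysis into the cases singled out by Danskin's lemma. In the case $V(t) = \xi V_x(t)$ (so $V_u(t) \leq \xi V_x(t)$), the active Lie derivative is $\xi \dot V_x(t)$, and the first differential inequality together with the case hypothesis gives
\begin{align*}
\xi \dot V_x(t) \leq \xi\bigl(-\mu_1 V_x(t) + \theta_1 V_u(t)\bigr) \leq \xi V_x(t)\bigl(-\mu_1 + \xi\theta_1\bigr) = -(\mu_1 - \xi\theta_1)\,V(t).
\end{align*}
Symmetrically, when $V(t) = V_u(t)$ (so $\xi V_x(t) \leq V_u(t)$, i.e., $V_x(t) \leq V_u(t)/\xi$), the second hypothesis yields
\begin{align*}
\dot V_u(t) \leq \alpha\bigl(\theta_2 V_x(t) - \mu_2 V_u(t)\bigr) \leq -\alpha\bigl(\mu_2 - \tfrac{\theta_2}{\xi}\bigr)\,V(t).
\end{align*}
By the hypotheses $-\mu_1 + \xi\theta_1 < 0$ and $\theta_2 - \xi\mu_2 < 0$, both prefactors are strictly negative, so taking the max of the two bounds (as prescribed by \Cref{lem:Danskin} in the degenerate case where both arguments coincide) gives $D^+ V(t) \leq -\tau V(t)$ for almost every $t \geq 0$, with the announced $\tau > 0$.

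Finally I would invoke \Cref{lem:Gronwall} on $V$ to obtain $V(t) \leq e^{-\tau t} V(0)$ for all $t \geq 0$, which is the claim. The only mildly delicate step is justifying the use of Danskin's lemma and Gr\"onwall's inequality at points where $\xi V_x$ and $V_u$ cross; this is handled cleanly because (i) Danskin's formula returns the \emph{maximum} over active indices, and (ii) both one-sided bounds above are $\leq -\tau V(t)$, so the max of the two is still $\leq -\tau V(t)$. No further obstacles are expected, since the differential inequalities hold almost everywhere by hypothesis and the functions involved are continuous, meeting the assumptions of both supporting lemmas.
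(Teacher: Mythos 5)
Your proof is correct and follows essentially the same route as the paper's: Danskin's lemma to evaluate $D^+V$ in the three cases ($\xi V_x$ active, $V_u$ active, both active), the same case-wise bounds yielding $D^+V(t)\leq -\tau V(t)$, and the Gr\"onwall-type estimate of \Cref{lem:Gronwall} to conclude.
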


\begin{proof}
   Since $V$ is possibly not differentiable at time $t$  if $\xi V_x(t) = V_u(t)$, we use the Dini derivative. By \Cref{lem:Danskin}, if $\xi V_x(t) > V_u(t)$, then $ D^+ V(t) = \xi \dot V_x(t)  \leq  -\mu_1 \xi V_x(t) + \xi \theta_1 V_u(t) \leq - \mu_1 V(t) + \xi \theta_1 V(t) \leq -\tau V(t)$. Similarly, if $\xi V_x(t) < V_u(t)$,  we have $D^+ V(t) = \dot V_u(t) \leq  \alpha (\theta_2 V_x(t) - \mu_2 V_u(t)) \leq \alpha ( \frac{\theta_2}{\xi}  V(t) - \mu_2 V(t)) \leq - \tau V(t)$. Finally, still by \Cref{lem:Danskin}, if $\xi V_x(t) = V_u(t)$, then $D^+ V(t) = \max \{\xi \dot V_x, \dot V_u \} \leq   \max \{ -\mu_1\xi V_x(t) + \xi \theta_1 V_u(t), \alpha(\theta_2 V_x(t) - \mu_2 V_u(t)) \} \leq \max \{ - \tau V(t), -\tau V(t) \} = -\tau V(t)$. We conclude that, for all $t>0$,
   $D^+ V(t) \leq -\tau V(t)$.
   The conclusion follows by \Cref{lem:Gronwall}.
\end{proof}

Note that $\alpha$ in \Cref{lem:fundamental}  is arbitrary. The relations $-\xi \mu_1 +  \theta_1 <0$ and   $-\mu_2+ \xi \theta_2 <0$ can be seen as block ``diagonally dominance'' conditions, weighted by the parameter $\xi$, which provides an extra degree of freedom.  
Our goal is of course to apply \Cref{lem:fundamental} to the Lyapunov analysis of \eqref{eq:closedloop}. 

\subsection{Main result}

We start by formulating our main convergence condition. 

\begin{assumption}\label{asm:core}
	Let $u^\star$ be a critical point of \eqref{eq:opt2}, and let $x^\star = s(u^\star)$.  With reference to the dynamics in \eqref{eq:closedloop}, there exist positive constants $c_1,d_1,\mu_1,\theta_1$, $c_2,d_2,\mu_2,\theta_2,\xi >0$ and continuously differentiable functions $V_x:\R^n \rightarrow \R_{\geq 0}$ and $V_u:\R^m \rightarrow \R_{\geq 0}$,  such that, for all $x \in \R^n$ and $ u\in \R^m$,
	\begin{align*}
		c_1\|x-x^\star\|^2  & \leq V_x(x) \leq d_1 \| x-x^\star\|^2 
		\\
		c_2\|u-u^\star\|^2 &  \leq V_u(u) \leq d_2 \| u-u^\star\|^2,
	\end{align*}
	and moreover:
	\begin{itemize}
		\item[(i)] \emph{Plant robust stability}: $\forall x,u$, it holds that
		\begin{align}
			\dot V_x(x) \leq -\mu_1 V_x(x) + \theta_1 V_u(u);
		\end{align}
		\item[(ii)] \emph{Algorithm robust stability}:  $\forall x,u$, it holds that 
		\begin{align}
			\dot V_u(u) \leq \alpha \left (-\mu_2 V_u(u) + \theta_2 V_x(x) \right) ;
		\end{align}
		\item[(iii)] \emph{Parameter dominance}: 
		It holds that $-\mu_1 + \xi \theta_1 <0$ and   $  \theta_2 - \xi \mu_2 <0$. 
	\end{itemize}
\end{assumption}

We postpone a detailed discussion of \Cref{asm:core} to \Cref{sec:OnAssumption}, after presenting our main result. 
\medskip 

\begin{theorem}\label{th:core}
	Let Assumptions~\ref{asm:plant}, \ref{asm:cost}, and \ref{asm:core} hold.  Then, for any $\alpha >0$, the closed-loop system \eqref{eq:closedloop} is globally exponentially stable, and $(u,y)$ converges to the unique solution $(u^\star,y^\star)$ of \eqref{eq:opt1}.
\end{theorem}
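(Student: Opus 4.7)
The plan is to apply \Cref{lem:fundamental} along trajectories of the closed-loop system \eqref{eq:closedloop}. By local Lipschitz continuity (Assumptions~\ref{asm:plant}--\ref{asm:cost}), a Carathéodory solution $(x(t), u(t))$ exists locally from any initial condition, and the composite functions $t \mapsto V_x(x(t))$ and $t \mapsto V_u(u(t))$ are absolutely continuous. The chain rule together with items (i)--(ii) of \Cref{asm:core} then yields, for almost every $t$,
\begin{align*}
\tfrac{d}{dt} V_x(x(t)) &\leq -\mu_1 V_x(x(t)) + \theta_1 V_u(u(t)),\\
\tfrac{d}{dt} V_u(u(t)) &\leq \alpha\bigl(-\mu_2 V_u(u(t)) + \theta_2 V_x(x(t))\bigr),
\end{align*}
which are exactly the hypotheses of \Cref{lem:fundamental}; item (iii) of \Cref{asm:core} supplies the required parameter dominance. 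The lemma therefore gives $V(t) \coloneqq \max\{\xi V_x(x(t)),\, V_u(u(t))\} \leq e^{-\tau t} V(0)$ with $\tau = \min\{\mu_1 - \xi\theta_1,\ \alpha(\mu_2 - \theta_2/\xi)\} > 0$, \emph{for any $\alpha > 0$}.

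Next, I would translate this decay of $V$ into exponential decay of the full state $\omega(t) \coloneqq (x(t), u(t))$. Using the two-sided quadratic bounds in \Cref{asm:core}, with $\omega^\star \coloneqq (x^\star, u^\star)$, one has
\begin{align*}
\min\{\xi c_1, c_2\}\,\|\omega(t) - \omega^\star\|^2 \leq 2V(t) \leq 2e^{-\tau t} V(0) \leq 2\max\{\xi d_1, d_2\}\, e^{-\tau t}\,\|\omega(0) - \omega^\star\|^2,
\end{align*}
which establishes global exponential stability of $\omega^\star$. Boundedness of $V$ also excludes finite-time blow-up, so trajectories extend to all $t \geq 0$. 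Continuity of $g$ then gives $y(t) = g(x(t)) \to g(x^\star) = h(u^\star) = y^\star$, where $y^\star \coloneqq h(u^\star)$.

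It remains to argue that $(u^\star, y^\star)$ is the unique solution of \eqref{eq:opt1}. Global exponential stability to $\omega^\star$ implies it is the unique equilibrium of \eqref{eq:closedloop}. Since equilibria satisfy $f(x,u) = 0$ (so $x = s(u)$) and $\tilde H(u)\nabla\Phi(u, h(u)) = \nabla \tilde \Phi(u) = 0$, they are in bijection with critical points of $\tilde \Phi$. Hence $u^\star$ is the unique critical point of $\tilde \Phi$ and, using the existence guaranteed by \Cref{asm:cost}, it coincides with the unique minimizer; thus $(u^\star, y^\star)$ is the unique solution of \eqref{eq:opt1}.

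The main obstacle is essentially encapsulated in \Cref{lem:fundamental}; beyond invoking it, the remaining work is routine. One point worth highlighting is that in \Cref{asm:core} the bound in item (i) does not scale with $\alpha$ (since the plant dynamics are independent of $\alpha$), whereas both constants in item (ii) are multiplied by $\alpha$ (since $\alpha$ uniformly scales the controller \eqref{eq:OFO}). This matches exactly the scaling assumed by \Cref{lem:fundamental}, so the dominance conditions in (iii) are $\alpha$-free, and the whole argument closes without any smallness requirement on $\alpha$.
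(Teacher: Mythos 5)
Your proposal is correct and follows essentially the same route as the paper: local existence from the Lipschitz hypotheses, the composite Lyapunov function $V=\max\{\xi V_x,V_u\}$ fed into \Cref{lem:fundamental} via \Cref{asm:core}(i)--(iii), the two-sided quadratic bounds to convert decay of $V$ into exponential convergence of $(x,u)$, continuity of $g$ for the output, and uniqueness of the equilibrium/critical point to identify the limit with the unique solution of \eqref{eq:opt1}. No gaps; your closing remark on the $\alpha$-scaling matches the paper's intended reading of the assumption.
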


\begin{proof}
	By the local Lipschitz conditions in \Cref{asm:plant,asm:cost}, the system in \eqref{eq:closedloop} admits a unique (local) solution, for any initial condition. Consider the candidate Lyapunov function $V(x,u)= \max \{ \xi V_x(x), V_u(u)\}$, $V_x$ and $V_u$ as in \Cref{asm:core}. Then, by \Cref{lem:fundamental}, $V(x(t),u(t)) \leq e^{-\tau t} V(x(0),u(0))$. Since $V$ is continuous with bounded level sets and is monotonically decreasing to zero along the solutions of \eqref{eq:closedloop}, we can immediately conclude that  \eqref{eq:closedloop} has a unique complete solution and is asymptotically stable. Since $V(x,u)$ is radially unbounded, the result holds globally. By \Cref{asm:core} and definition of $V$, $ \frac{\xi c_1}{2}\|x-x^\star\|^2 +\! \frac{c_2}{2}\|u-u^\star\|^2 \! \leq \! V(x,u) \! \leq  \! d_1 \xi \|x-x^\star\|^2 
	+ d_2\|u-u^\star\|^2$, hence convergence is exponential. The fact that  $x(t) \rightarrow x^\star$, $u(t) \rightarrow u^\star$ as $t\rightarrow \infty$  implies  $y(t) = g(x(t)) \rightarrow g(x^\star) = g(s(u^\star)) = h(x^\star) = y^\star$  by continuity of $g$. Since the result is global, $(x^\star,u^\star)$ must be the unique equilibrium of \eqref{eq:closedloop}, thus \eqref{eq:opt1} has a unique critical point, thus a unique solution (one solution exists by \Cref{asm:cost}). 
\end{proof}

\subsection{On \Cref{asm:core}}\label{sec:OnAssumption}

\Cref{asm:core} is novel and deserves some discussion. In the following, we provide  insight by relating it to some standard conditions used in the \gls{OFO} literature, collected in \Cref{asm:simplifying} below. We will show that the common conditions in \Cref{asm:simplifying}  are more restrictive than the novel conditions we introduced in  \Cref{asm:core}(i)-(ii).  

\begin{assumption}\label{asm:simplifying}
	The following holds:
	\begin{itemize}
		\item[(a)] The map $f(x,u)$ in \eqref{eq:plant} is $\ell_f$-Lipschitz in $u$ for any fixed $x\in\R^n$; the map $g$ in \eqref{eq:plant}  is $\ell_g$-Lipschitz continuous; 
		\item[(b)] With reference to the plant in \eqref{eq:plant}, there exist constants $c_3,d_3,\mu_3,\zeta_3 >0$ and  a function $W:\R^n\times \R^m \rightarrow \R$, such that, for any constant input $u(t) =u\in\R^m$, and any $x\in\R^n$,
		\begin{align*}
			c_3 \| x- s(u)\|^2 \leq W(x,u) & \leq d_3 \| x- s(u)\| ^2
			\\ \nabla_x W(x,u)^\top f(x,u) & \leq - \mu_3  \|x-s(u)\|^2
			\\
			\| \nabla_x W (x,u)\| & \leq \zeta_3\| x-s(u) \|;
		\end{align*}
		\item[(c)] The cost function $\Phi(u,y)$ in \eqref{eq:opt1} is $\mu_\Phi$-strongly convex in $u$ for any fixed $y\in \R^p$; the map $\tilde H(u)\nabla \Phi(u,y) $  is $\ell_{\Phi_y}$-Lipschitz continuous in $y$ for any fixed $u\in \R^m$;
		\item[(d)] The map $\nabla h(u)^\top \nabla_y \Phi(u,y)$ is $\ell_{\Phi_u}$-Lipschitz continuous in $u$, for any fixed $y\in \R^p$, and it holds that $\mu_\Phi > \ell_{\Phi_u}$.  
	\end{itemize}
\end{assumption}

\medskip
Let us first compare \Cref{asm:core}(i)
and \Cref{asm:simplifying}(b). \Cref{asm:simplifying}(b) is rather standard in the \gls{OFO} literature; it stipulates that the plant \eqref{eq:plant} converges exponentially to its steady state $s(u)$ for any constant input (see \cite[Th.~5.17]{Sastry} or \cite[Prop.~2.1]{Hauswirthadrian2021}). Instead, \Cref{asm:core}(i) is an input-to-state stability property with respect to the input $(u-u^\star)$; it implies exponential convergence only when $u = u^\star$, rather than for any arbitrary input (although \emph{asymptotic} stability for fixed input is assumed in \Cref{asm:plant}). In fact, in \Cref{prop:parameters} below we establish that,  under \Cref{asm:simplifying}(a) (which is also  standard \cite[Asm.~2.1]{Hauswirthadrian2021}), \Cref{asm:simplifying}(b) implies \Cref{asm:core}(i). 

We will also show that  \Cref{asm:simplifying}(c)-(d) imply \Cref{asm:core}(ii). In \Cref{asm:simplifying}(c), the smoothness is a common condition. Strong convexity was assumed for instance in \cite{Colombino2020,Belgioiosogiuseppe2021}. We should emphasize that several results on \gls{OFO} stability are available also for  general smooth nonconvex costs, under timescale separation \cite{Hauswirthadrian2021}: however, in this case it  appears difficult to ensure a property like \Cref{asm:core}(ii) (if not resorting to extra conditions, e.g., error bounds). \Cref{asm:simplifying}(d) is for instance automatically satisfied with $\ell_{\Phi_u} =0$ whenever $\Phi(u,y) = \Phi_u(u)+\Phi_y(y)$ is separable and  $h$ is a linear map (e.g., for linear plants, which are the most commonly studied in the literature \cite{Colombino2020,menta2018stability,Bernstein2019}).

\begin{proposition}\label{prop:parameters}
	Let \Cref{asm:simplifying} hold. Then \Cref{asm:core}(i) and \ref{asm:core}(ii) hold with $V_x(x) = W(x,u^\star)$, $V_u = \frac{1}{2}\| u- u^\star\|^2$, $\mu_1 = \frac{\mu_3}{2 d_3}$, $\theta_1 = \frac{\ell_f^2 \zeta_3^2}{2 \mu_3}$, $\theta_2 =\frac{\ell_g^2 \ell_{\Phi_y}^2}{2(\mu_\Phi-\ell_{\Phi_u}) c_3}$, and $\mu_2 =  \frac{(\mu_\Phi-\ell_{\Phi_u})}{2}>0.$
\end{proposition}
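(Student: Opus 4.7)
The plan is to verify (i) and (ii) of \Cref{asm:core} by direct Lie-derivative computations along the closed-loop \eqref{eq:closedloop} using the proposed $V_x(x)=W(x,u^\star)$ and $V_u(u)=\tfrac12\|u-u^\star\|^2$. The two-sided quadratic bounds on $V_x$ follow from \Cref{asm:simplifying}(b) specialized to the constant input $u=u^\star$ (so one may take $c_1=c_3$, $d_1=d_3$), and those on $V_u$ are trivial.

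For part (i), I would add and subtract $f(x,u^\star)$ inside the Lie derivative to split
\begin{align*}
\dot V_x = \nabla_x W(x,u^\star)^\top f(x,u^\star) + \nabla_x W(x,u^\star)^\top\bigl[f(x,u)-f(x,u^\star)\bigr].
\end{align*}
\Cref{asm:simplifying}(b) at $u^\star$ bounds the first summand by $-\mu_3\|x-x^\star\|^2$, and Cauchy--Schwarz, combined with the gradient estimate in (b) and the $u$-Lipschitzness from (a), bounds the second by $\zeta_3\ell_f\|x-x^\star\|\|u-u^\star\|$. A Young's split weighted by $\mu_3/\zeta_3^2$ absorbs half of the dissipation, and converting through $V_x\leq d_3\|x-x^\star\|^2$ yields the stated $\mu_1$ and $\theta_1$.

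For part (ii), $\dot V_u=-\alpha(u-u^\star)^\top\tilde H(u)\nabla\Phi(u,g(x))$, and the algebraic heart of the proof is the double telescoping
\begin{align*}
\tilde H(u)\nabla\Phi(u,g(x)) &= \bigl[\tilde H(u)\nabla\Phi(u,g(x))-\tilde H(u^\star)\nabla\Phi(u^\star,g(x))\bigr]\\
&\quad + \tilde H(u^\star)\bigl[\nabla\Phi(u^\star,g(x))-\nabla\Phi(u^\star,h(u^\star))\bigr],
\end{align*}
where I exploit $\tilde H(u^\star)\nabla\Phi(u^\star,h(u^\star))=\nabla\tilde\Phi(u^\star)=0$ at the critical point $u^\star$ to insert the second subtraction at no cost. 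Pairing the first bracket with $(u-u^\star)$, the $\nabla_u\Phi$ piece contributes $\geq \mu_\Phi\|u-u^\star\|^2$ (strong convexity, (c)) and the sensitivity-weighted $\nabla_y\Phi$ piece contributes $\geq-\ell_{\Phi_u}\|u-u^\star\|^2$ (Cauchy--Schwarz and (d)), giving net dissipation $(\mu_\Phi-\ell_{\Phi_u})\|u-u^\star\|^2>0$ by hypothesis. The second bracket is controlled in norm by $\ell_{\Phi_y}\|g(x)-g(x^\star)\|\leq\ell_g\ell_{\Phi_y}\|x-x^\star\|$, using $y$-Lipschitzness in (c), the identity $h(u^\star)=g(x^\star)$, and the $g$-Lipschitzness in (a). A final Young's split balanced by $\mu_\Phi-\ell_{\Phi_u}$, combined with $\|x-x^\star\|^2\leq V_x/c_3$, delivers the stated $\mu_2$ and $\theta_2$.

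The only delicate step I anticipate is the choice of pivots in (ii): pivoting first at $(u^\star,g(x))$ is what lets assumptions (c) and (d) combine into the clean $(\mu_\Phi-\ell_{\Phi_u})$-monotonicity, and the subsequent pivot at $(u^\star,h(u^\star))$---enabled solely by the optimality relation $\nabla\tilde\Phi(u^\star)=0$---is what reduces the residual to a purely $x$-dependent Lipschitz term suitable for Young's inequality. Without this second pivot, the residual would require additional Lipschitz hypotheses on $\nabla h$ or on $\nabla_y\Phi$ in $u$, which are not postulated in \Cref{asm:simplifying}.
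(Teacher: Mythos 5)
Your proposal is correct and follows essentially the same route as the paper: the same add-and-subtract of $f(x,u^\star)$ with a Young's split for part (i), and the same two-stage pivot at $(u^\star,y)$ and then $(u^\star,y^\star)$ (justified by $\nabla\tilde\Phi(u^\star)=0$) for part (ii), with strong convexity, Assumption (d), and the $y$-Lipschitzness applied to exactly the same three residual pieces. No meaningful difference from the paper's proof.
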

\medskip 
\begin{proof}
	Let $V_x(x) = W(x,u^\star)$ and $V_u(u) = \frac{1}{2}\|u-u^\star\|^2$  (where $u^\star$ can be any critical point of  \eqref{eq:opt2}). For the first part, along \eqref{eq:closedloop}, we have
	\begin{align*}
		\dot V_x(x) & = \nabla_x W(x,u^\star)^\top f(x,u)
		\\
		& = \nabla_x W(x,u^\star)^\top (f(x,u^\star) + f(x,u)-f(x,u^\star))
		\\
		& \textstyle
		\leq  -\mu_3 \|x- s(u^\star) \|^2 + \ell_f \zeta_3 \|x-s(u^\star)\|\|u- u^\star\|
		\\ 
		&  \textstyle \overset{(i)} \leq -\frac{\mu_3}{2}\|x- s(u^\star) \|^2 + \frac{\ell_f^2 \zeta_3^2}{2 \mu_3}\|u- u^\star\|^2
		\\ 
		& \textstyle \leq - \frac{\mu_3}{2 d_3} W(x,u^\star) + \frac{\ell_f^2 \zeta_3^2}{2 \mu_3} V_u(u),
	\end{align*}
	where we used Young's inequality in (i). For the second part, we have 
	\begin{align*} \dot V_u(u)  & = -\alpha(u-u^\star)^\top ( \tilde H(u)\nabla \Phi (u,y)) \\ & = -\alpha(u-u^\star)^\top ( \nabla_u \Phi (u,y) + \nabla h(u)^\top \nabla_y \Phi(u,y)
    \\ & \hphantom{{}={}}
	\pm
	\nabla_u \Phi(u^\star,y) \pm \nabla h(u^\star)^\top \nabla_y \Phi(u^\star,y) 
	\\ & \hphantom{{}={}} - \nabla_u \Phi(u^\star,y^\star)- \nabla h(u^\star)^\top \nabla_y \Phi(u^\star,y^\star)),\end{align*} where $y^\star \coloneqq h(u^\star)$. In the last equality we added an subtracted some terms in order to be able to use the bounds in  \Cref{asm:simplifying}, and furthermore we used that $u^\star$ is a critical point of \eqref{eq:opt2}, i.e.,   $\nabla_u \Phi(u^\star,y^\star)+ \nabla h(u^\star)^\top \nabla_y \Phi(u^\star,y^\star)= 0$. Hence we have
	\begin{align*}
		\dot V_u(u) & \hspace{-0.2em}\leq \hspace{-0.2em} -\alpha (\mu_\Phi-\ell_{\Phi_u})  \|u - u^\star\|^2 \hspace{-0.2em}+ \hspace{-0.2em}\alpha \ell_{\Phi_y}  \|u-u^\star\| \|y-y^\star\|
		\\ 
		& \textstyle \overset{(ii)}\leq - \alpha \frac{\mu_\Phi-\ell_{\Phi_u}}{2}\|u-u^\star\|^2 +\alpha \frac{\ell_{\Phi_y}^2 \ell_g^2}{2 (\mu_\Phi-\ell_{\Phi_u})} \| x-x^\star\|^2
		\\
		& \textstyle  \leq -\alpha \frac{(\mu_\Phi-\ell_{\Phi_u})}{2} V_u(u)+ \frac{\ell_{\Phi_y}^2 \ell_g^2}{2(\mu_\Phi-\ell_{\Phi_u}) c_3}  V_x(x),
	\end{align*}
	where we used Young's inequality in (ii).
\end{proof}

\begin{corollary} \label{cor:first}
	Let \Cref{asm:plant}, \ref{asm:cost} and \ref{asm:simplifying} hold, and let $\mu_1,\theta_1,\theta_2,\mu_2$ be as in \Cref{prop:parameters}. If there is $\xi>0$ such that $-\mu_1 + \xi \theta_1 <0$ and   $  \theta_2 - \xi \mu_2 <0$, then,  for any $\alpha>0$, the closed-loop system \eqref{eq:closedloop} is globally exponentially stable, and $(u,y)$ converges to the unique solution $(u^\star,y^\star)$ of \eqref{eq:opt1}.
\end{corollary}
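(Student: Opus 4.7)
My plan is to show that \Cref{cor:first} is essentially an immediate corollary of \Cref{th:core}, once we verify that the hypotheses of \Cref{asm:core} are all met. The work is therefore mostly bookkeeping: \Cref{prop:parameters} already delivers items (i) and (ii) of \Cref{asm:core} with explicit constants, the standing hypothesis on $\xi$ is exactly item (iii), and what remains is to check the quadratic sandwich bounds on the candidate Lyapunov functions $V_x$ and $V_u$.

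First, I would fix $u^\star$ to be a critical point of \eqref{eq:opt2} (existence is guaranteed by \Cref{asm:cost}, since any solution of \eqref{eq:opt1} yields one) and set $x^\star = s(u^\star)$. I then define $V_x(x) = W(x,u^\star)$ and $V_u(u) = \tfrac{1}{2}\|u-u^\star\|^2$, as in \Cref{prop:parameters}. The quadratic bounds in \Cref{asm:core} are immediate: by \Cref{asm:simplifying}(b) applied with constant input $u^\star$,
\begin{align*}
c_3 \|x-x^\star\|^2 \leq V_x(x) \leq d_3 \|x-x^\star\|^2,
\end{align*}
so $c_1 = c_3$, $d_1 = d_3$; and by construction $c_2 = d_2 = \tfrac{1}{2}$ work for $V_u$. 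Both functions are continuously differentiable on $\mathbb{R}^n$ and $\mathbb{R}^m$ respectively (for $V_u$ trivially, for $V_x$ by the implicit smoothness assumed on $W$).

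Next, I would invoke \Cref{prop:parameters}, which gives the two robust-stability inequalities of \Cref{asm:core}(i)--(ii) along the closed-loop \eqref{eq:closedloop}, with the stated values $\mu_1 = \tfrac{\mu_3}{2 d_3}$, $\theta_1 = \tfrac{\ell_f^2 \zeta_3^2}{2\mu_3}$, $\theta_2 = \tfrac{\ell_g^2 \ell_{\Phi_y}^2}{2(\mu_\Phi - \ell_{\Phi_u}) c_3}$, and $\mu_2 = \tfrac{\mu_\Phi - \ell_{\Phi_u}}{2}$; the positivity of $\mu_2$ uses \Cref{asm:simplifying}(d). The parameter-dominance condition in \Cref{asm:core}(iii) is exactly what is assumed in the statement. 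Hence all three parts of \Cref{asm:core} hold, and \Cref{th:core} applies verbatim to conclude global exponential stability of \eqref{eq:closedloop} and convergence of $(u,y)$ to the unique solution $(u^\star,y^\star)$ of \eqref{eq:opt1} for any $\alpha>0$.

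The only conceptual subtlety, rather than a serious obstacle, is that \Cref{prop:parameters} picks $u^\star$ as any critical point of \eqref{eq:opt2} in order to define $V_x$, $V_u$, but \Cref{asm:core} is stated relative to a single critical point. This is resolved automatically: as in the proof of \Cref{th:core}, global exponential convergence to $(x^\star,u^\star)$ forces $(x^\star,u^\star)$ to be the unique equilibrium of \eqref{eq:closedloop}, so \eqref{eq:opt2} has a unique critical point which is also its unique solution. I expect no further difficulties; the proof is essentially a one-line chaining of \Cref{prop:parameters} into \Cref{th:core}.
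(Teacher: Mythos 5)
Your proof is correct and follows exactly the paper's route: the paper disposes of this corollary in one line as ``a direct consequence of \Cref{th:core} and \Cref{prop:parameters},'' and your write-up simply makes explicit the bookkeeping (quadratic sandwich bounds, positivity of $\mu_2$ via \Cref{asm:simplifying}(d), and the uniqueness of the critical point) that the paper leaves implicit.
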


\begin{proof}
    A direct consequence of \Cref{th:core} and \Cref{prop:parameters}.
\end{proof}

\begin{corollary} \label{rem:enforcing}
	Let $\mu_1$, $\theta_1$, $\theta_2$, and
	$\mu_2$  be as in \Cref{prop:parameters}. Under \Cref{asm:simplifying},
	the dominance condition in \Cref{asm:core}(iii), i.e., $ -\mu_1 + \xi \theta_1 <0$ and $\theta_2 - \xi \mu_2 <0$,     holds for some $\xi>0$ if
	\begin{align}\label{eq:muBound}
		\mu_\phi > \ell_{\Phi_u}+ \sqrt{ \frac{\ell_g^2 \ell_{\Phi_y}^2 d_3 \zeta_3^2 \ell_f^2 }{c_3\mu_3^2}}.
	\end{align}
\end{corollary}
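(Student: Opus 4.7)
The plan is to observe that the dominance condition in Assumption~\ref{asm:core}(iii) is nothing but a compatibility requirement between two one-sided bounds on the free parameter $\xi$. Specifically, since $\theta_1,\mu_2>0$, the two inequalities $-\mu_1+\xi\theta_1<0$ and $\theta_2-\xi\mu_2<0$ are equivalent to
\[
\tfrac{\theta_2}{\mu_2}<\xi<\tfrac{\mu_1}{\theta_1}.
\]
Hence such a $\xi>0$ exists if and only if the interval is nonempty, i.e.,
\[
\theta_1\theta_2<\mu_1\mu_2.
\]
So the first step is to reduce the corollary to establishing this single scalar inequality.

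Next, I would substitute the explicit expressions for $\mu_1,\theta_1,\mu_2,\theta_2$ provided by Proposition~\ref{prop:parameters}. A direct computation gives
\[
\mu_1\mu_2=\frac{\mu_3(\mu_\Phi-\ell_{\Phi_u})}{4 d_3},\qquad
\theta_1\theta_2=\frac{\ell_f^2\zeta_3^2\ell_g^2\ell_{\Phi_y}^2}{4 c_3\mu_3(\mu_\Phi-\ell_{\Phi_u})}.
\]
The inequality $\theta_1\theta_2<\mu_1\mu_2$ then becomes, after multiplying by $4(\mu_\Phi-\ell_{\Phi_u})>0$ (note $\mu_\Phi>\ell_{\Phi_u}$ by Assumption~\ref{asm:simplifying}(d), which also guarantees $\mu_2>0$ so the manipulation is legitimate),
\[
\frac{\ell_f^2\zeta_3^2\ell_g^2\ell_{\Phi_y}^2 d_3}{c_3\mu_3^2}<(\mu_\Phi-\ell_{\Phi_u})^2.
\]
Taking square roots on both sides of this strictly positive inequality yields exactly the bound \eqref{eq:muBound}.

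Conversely, if \eqref{eq:muBound} holds, then the above chain of equivalences can be traversed in reverse to conclude $\theta_1\theta_2<\mu_1\mu_2$, so the open interval $(\theta_2/\mu_2,\mu_1/\theta_1)$ is nonempty and any $\xi$ in this interval satisfies Assumption~\ref{asm:core}(iii). There is no real obstacle here: the argument is purely algebraic, once one spots that the two dominance inequalities are simply upper and lower bounds on the same scalar $\xi$. The only minor point to be careful about is the sign of $\mu_2=(\mu_\Phi-\ell_{\Phi_u})/2$, which is positive precisely by Assumption~\ref{asm:simplifying}(d), ensuring that the division by $\mu_2$ (and the clearing of the factor $\mu_\Phi-\ell_{\Phi_u}$) preserves the direction of the inequality.
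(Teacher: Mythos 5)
Your proof is correct and follows essentially the same route as the paper: both arguments treat the two dominance inequalities as a lower and an upper bound on $\xi$, reduce existence of a feasible $\xi$ to a single scalar inequality (your $\theta_1\theta_2<\mu_1\mu_2$, the paper's substitution of the largest admissible $\xi$ into the second constraint), and then substitute the constants from \Cref{prop:parameters}. Your interval-nonemptiness phrasing additionally makes explicit that \eqref{eq:muBound} is in fact equivalent to the dominance condition, which is a slightly stronger (but consistent) observation.
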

\vspace{1em}

\begin{proof}
	By the expression of   $\mu_1,\theta_1,\theta_2,
	\mu_2$  in \Cref{prop:parameters}, \Cref{asm:core}(iii) is satisfied when $-\frac{\mu_3}{2d_3} + \xi \frac{\ell_f^2 \zeta_3^2}{2\mu_3} < 0$ and $\frac {\ell_g^2 \ell_{\Phi_{y}}^2} { 2 ( \mu_\Phi - \ell_{\Phi_{u}}) c_3}  - \xi \frac{\mu_\Phi - \ell_{\Phi_{u}}}{2} <0 $. The first inequality imposes $\xi < \frac{ \mu_3^2 }{\ell_f^2 \zeta_3^2 d_3}$, while the second reads $( \mu_\Phi - \ell_{\Phi_{u}})^2 > \frac{\ell_g^2 \ell_{\Phi_y}^2}{c_3 \xi}$. Combining the two gives the bound in \eqref{eq:muBound}.
\end{proof}

Notably, the condition in \eqref{eq:muBound} can be always enforced, by replacing the original cost  $\Phi$ with a regularized cost $\bar \Phi(u,y) = \Phi(u,y) + \rho(u)$, where $\rho$ is a sufficiently strongly convex function (e.g., $\rho(u) = \frac{\mu_4}{2} \|u\|^2$, with $\mu_4>0$ large enough so that $\mu_\Phi+\mu_4$ is greater than the right-hand side of \eqref{eq:muBound}). This highlights a trade-off between steady-state and transient performance of the controller: at the price of some suboptimality, arbitrary values of $\alpha$ are allowed ---instead of requiring a small-enough $\alpha$, as in the previous literature.

\section{Input constraints} \label{sec:constrained}
In this section, we assume that the input $u$ is constrained to be in a closed, convex set $\mathcal{U} \subset \R^m$; the objective is hence to drive the plant to the steady-state solutions of 
\begin{align}\label{eq:optConst}
	\min_{u\in\mathcal U,y\in\R^p} & \quad   \Phi(u,y) \qquad 
	\textnormal{s.t.}  \   y = h(u).
\end{align}
We can address input constraints by replacing \eqref{eq:OFO} with the smooth dynamics
\begin{align}\label{eq:OFOConstrainedSmooth}
	\dot u = -\alpha u+\alpha\operatorname{proj}_{\mathcal{U}}  \left( u - \beta \tilde H(u)\nabla \Phi(u,y)\right),
\end{align}
where  $\proj_{\mathcal{U}} (v)  =  \argmin_{u\in\mathcal{U}}  \|u-v\|  $ is the Euclidean projection and  $\beta>0$ is a stepsize. With the same arguments of \Cref{th:core}, we can still conclude the stability of the closed-loop \eqref{eq:plant}, \eqref{eq:OFOConstrainedSmooth}  under a condition  analogous to \Cref{asm:core}. 
Interestingly, \Cref{asm:simplifying}  still suffices to ensure stability, under the same bound \eqref{eq:muBound}.

\begin{corollary}\label{cor:projected}
	Let Assumptions \ref{asm:plant}, \ref{asm:cost}, and \ref{asm:simplifying} hold, and let $\mu_1,\theta_1,\theta_2,\mu_2$ be as in \Cref{prop:parameters}. Assume that $\mathcal{U}$ is closed convex, that \eqref{eq:optConst} admits a solution, that $\nabla_u \Phi(u,y)$ is $L$-Lipschitz in $u$ for any fixed $y \in \R^p$, and that $\beta \leq \frac{1}{L}$. Assume that there is $\xi>0$ such that $-\mu_1 + \xi \theta_1 <0$ and   $  \theta_2 - \xi \mu_2 <0$. Then,  for any $\alpha>0$, the closed-loop system \eqref{eq:plant}, \eqref{eq:OFOConstrainedSmooth} is globally exponentially stable, and $(u,y)$ converges to the unique solution $(u^\star,y^\star)$ of \eqref{eq:optConst}. Furthermore, if $u(0)$ in $\mathcal U$, then $u(t) \in \mathcal U$ for all $t \geq 0$. 
\end{corollary}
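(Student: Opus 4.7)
The plan is to verify the hypotheses of \Cref{lem:fundamental} with the same Lyapunov functions and constants as in \Cref{prop:parameters}, so that the conclusion of \Cref{th:core} carries over to the projected dynamics \eqref{eq:OFOConstrainedSmooth}; forward invariance of $\mathcal U$ then follows from a Duhamel argument. Since the $x$-dynamics in \eqref{eq:plant} are unchanged, setting $V_x(x) = W(x,u^\star)$ and reproducing verbatim the first half of the proof of \Cref{prop:parameters} yields $\dot V_x \leq -\mu_1 V_x + \theta_1 V_u$ with $V_u(u) = \tfrac{1}{2}\|u-u^\star\|^2$ and the same $\mu_1,\theta_1$.

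The main technical step is the controller bound. Writing $G(u,y) \coloneqq \tilde H(u)\nabla\Phi(u,y)$ and $T(u,y) \coloneqq \proj_{\mathcal U}(u - \beta G(u,y))$, the $u$-dynamics become $\dot u = \alpha(T(u,y) - u)$, and the KKT optimality of $u^\star$ for \eqref{eq:optConst} gives the fixed-point identity $u^\star = T(u^\star, y^\star)$. I would start from the algebraic identity
\begin{align*}
\dot V_u = \alpha \langle u-u^\star, T(u,y)-u\rangle = \tfrac{\alpha}{2}\bigl(\|T(u,y)-u^\star\|^2 - \|T(u,y)-u\|^2 - \|u-u^\star\|^2\bigr),
\end{align*}
drop the non-positive middle term, and invoke non-expansiveness of the projection, $\|T(u,y)-u^\star\| \leq \|u-u^\star -\beta(G(u,y)-G(u^\star,y^\star))\|$. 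Expanding the square and splitting $G(u,y)-G(u^\star,y^\star)$ into a $u$-part $G(u,y)-G(u^\star,y)$ and a $y$-part $G(u^\star,y)-G(u^\star,y^\star)$, I would combine (i) the strong monotonicity bound $\langle u-u^\star, G(u,y)-G(u^\star,y)\rangle \geq (\mu_\Phi - \ell_{\Phi_u})\|u-u^\star\|^2$ from \Cref{asm:simplifying}(c)--(d); (ii) the Lipschitz bound $\|G(u^\star,y)-G(u^\star,y^\star)\| \leq \ell_{\Phi_y}\ell_g\|x-x^\star\|$ from \Cref{asm:simplifying}(a),(c); and (iii) the cocoercivity of $\nabla_u\Phi(\cdot,y)$ together with $\beta \leq 1/L$, which absorbs the $\beta^2$-quadratic term back into the linear monotonicity term. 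Closing with the same Young's inequality as in \Cref{prop:parameters} yields $\dot V_u \leq \alpha(-\mu_2 V_u + \theta_2 V_x)$ with the same $\mu_2,\theta_2$.

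With both bounds in hand, the parameter-dominance hypothesis lets me apply \Cref{lem:fundamental} to $V = \max\{\xi V_x, V_u\}$, giving exponential decay and thus global exponential stability and convergence $(u,y)\to(u^\star,y^\star)$ exactly as in the proof of \Cref{th:core}. For forward invariance, Duhamel's formula applied to $\dot u = -\alpha u + \alpha T(u,y)$ gives
\begin{align*}
u(t) = e^{-\alpha t} u(0) + \alpha \int_0^t e^{-\alpha(t-s)} T(u(s),y(s))\, ds,
\end{align*}
which lies in $\mathcal U$ whenever $u(0)\in\mathcal U$, since $T(u(s),y(s))\in\mathcal U$, $\mathcal U$ is convex, and the scalar weights integrate to $1$.

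The crux is the controller bound: retaining \emph{exactly} the same rate constants $(\mu_2,\theta_2)$ as in the unprojected case despite the presence of the projection. This is what requires the cocoercivity enabled by $\beta \leq 1/L$; any slack at this step would force stronger conditions than \eqref{eq:muBound} and obstruct the ``same-bound'' claim of the corollary.
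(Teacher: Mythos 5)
Your overall strategy matches the paper's: keep $V_x(x)=W(x,u^\star)$ and $V_u(u)=\tfrac12\|u-u^\star\|^2$, use the fixed-point identity $u^\star=\proj_{\mathcal U}(u^\star-\beta\tilde H(u^\star)\nabla\Phi(u^\star,y^\star))$ and nonexpansivity of the projection, and then invoke \Cref{lem:fundamental}/\Cref{th:core}. Your forward-invariance argument via the variation-of-constants formula and convexity of $\mathcal U$ is valid and is a legitimate alternative to the paper's tangent-cone (Nagumo-type) argument. The gap is in the controller estimate. After the three-point identity and nonexpansivity you are left with
\begin{align*}
\dot V_u \leq \tfrac{\alpha}{2}\left(-2\beta\langle u-u^\star,\,\Delta G\rangle+\beta^2\|\Delta G\|^2\right),\qquad \Delta G \coloneqq G(u,y)-G(u^\star,y^\star),
\end{align*}
and you claim that cocoercivity of $\nabla_u\Phi(\cdot,y)$ with $\beta\leq 1/L$ absorbs the quadratic term while preserving the constants $(\mu_2,\theta_2)$ of \Cref{prop:parameters}. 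It does not: cocoercivity only controls $\|\nabla_u\Phi(u,y)-\nabla_u\Phi(u^\star,y)\|^2$, whereas $\Delta G$ also contains the merely Lipschitz (not monotone) part $\nabla h(\cdot)^\top\nabla_y\Phi(\cdot,y)$ and the $y$-dependent part $G(u^\star,y)-G(u^\star,y^\star)$. The resulting cross terms and the residual $\beta^2\|G(u^\star,y)-G(u^\star,y^\star)\|^2$ strictly inflate the effective $\theta_2$ (and degrade the effective $\mu_2$), so the hypothesized dominance condition $\theta_2-\xi\mu_2<0$ with the constants of \Cref{prop:parameters} no longer closes the argument. Since the corollary fixes those exact constants, this is a real obstruction, which you correctly identify as the crux but do not resolve.

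The paper avoids squaring altogether: from $\dot V_u=-\alpha\|u-u^\star\|^2+\alpha\langle u-u^\star,\,\proj_{\mathcal U}(u-\beta G(u,y))-\proj_{\mathcal U}(u^\star-\beta G(u^\star,y^\star))\rangle$, it applies Cauchy--Schwarz and nonexpansivity to get a bound that is \emph{linear} in $\|u-\beta G(u,y)-u^\star+\beta G(u^\star,y^\star)\|$, then splits this norm by the triangle inequality into the pure-gradient part (bounded by $(1-\beta\mu_\Phi)\|u-u^\star\|$ via contractivity of the gradient step, which is where $\beta\leq 1/L$ enters), the $\ell_{\Phi_u}$ part, and the $\ell_{\Phi_y}$ part. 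This reproduces exactly the bound of \Cref{prop:parameters} with $\alpha$ replaced by $\alpha\beta$, which is harmless since $\alpha$ is arbitrary. To repair your proof, replace the three-point identity with this linear estimate; the rest of your argument then goes through.
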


\begin{proof}
	We recall the nonexpansivity property of the Euclidean projection, namely that $\| \proj_{\mathcal{U}}(u) - \proj_{\mathcal{U}}(u') \| \leq \| u-u' \|$ for all $u,u'$.  Let $V_u(u) = \frac{1}{2}\| u -u^\star\|^2$, where $u^\star$ is any critical point of \eqref{eq:optConst}, and $y^\star = h(u^\star)$. We have
	\begin{align*}
		\dot V_u(u) & \overset{(i)}= -\alpha (u-u^\star)^\top [ u+\operatorname{proj}_{\mathcal{U}} ( u - \beta \tilde H(u)\nabla \Phi(u,y))
		\\ 
		& \hphantom{{}={}}
		\pm \operatorname{proj}_{\mathcal{U}} ( u^\star- \beta \tilde H(u^\star)\nabla \Phi(u^\star,y))
		\\
		&   \hphantom{{}={}}
		- u^\star + \operatorname{proj}_{\mathcal{U}} ( u^\star- \beta \tilde H(u^\star)\nabla \Phi(u^\star,y^\star))] 
		\\[-0.5em]
		&
		\overset{(ii)}\leq -\alpha\|u-u^\star\|^2 + \alpha \|u-u^\star\| (\cdot),
	\end{align*}
	where $-u^\star + \operatorname{proj}_{\mathcal{U}} ( u^\star- \beta \tilde H(u^\star)\nabla \Phi(u^\star,y^\star)) = 0$ is used  in (i), 
	and we used the nonexpansivity of the projection, the Cauchy Schwartz inequality, and the definition of $\tilde H$ in (ii). The term $(\cdot)$ 
	is
	\begin{align*}
		&  (\cdot) = 
		\|u- \beta \nabla_u \Phi(u,y) - u^\star+ \beta\nabla_u \Phi(u^\star,y)\|
		\\
		&  \hphantom{{}={}}  +\beta \|\nabla h(u)^\top \nabla_y \Phi(u,y) - \nabla h(u^\star)^\top \nabla_y \Phi(u^\star,y)\| 
		\\
		& \hphantom{{}={}}   + 
		\beta \| \tilde H(u^\star)\nabla\Phi(u^\star,y)- \tilde H(u^\star) \nabla\Phi(u^\star,y^\star ) \| 
		\\ & 
		\overset{(iii)}\leq (1-\beta \mu_\Phi) \|u-u^\star\| + \beta \ell_{\Phi_u} \|u-u^\star\| + \beta \ell_{\Phi_y} \|y-y^\star\|,
	\end{align*}
	where we used the  the contractivity of the gradient method for the first term in (iii). Summing up, we obtain  the same bound for $\dot V_u(u)$ as in the proof of \Cref{prop:parameters}, with the only difference of having the factor $\alpha\beta$ instead of $\alpha $ (which is irrelevant, since $\alpha$ is arbitrary). Then the exponential stability results follows as for \Cref{prop:parameters} and \Cref{th:core}. For the second statement, let $\mathrm T_{{\mathcal{U}}}(u) \coloneqq  \operatorname{cl}({\bigcup_{\delta>0}\frac{1}{\delta}(\mathcal U-u)})$ be the tangent cone of $\mathcal U$ at $u$, where $\operatorname{cl}(\cdot)$ is the set closure.
    By definition of $T_{{\mathcal{U}}}$, for the flow in \eqref{eq:OFOConstrainedSmooth}, if $u\in\mathcal U$, then $\dot u \in \mathrm T_{{\mathcal{U}}}(u)$. Hence the set $\mathcal U$ is invariant for \eqref{eq:OFOConstrainedSmooth}.
\end{proof}

It can be similarly shown that, under the very same conditions,  the nonsmooth projected controller $
\dot u =   \proj_{\mathrm T_{{\mathcal{U}}}(u)} (-\alpha \tilde H(u)\nabla \Phi(u,y)) $
(where $\mathrm{T}_{\mathcal U}(u)$ is the tangent cone of $\mathcal{U}$ at $u$) is also exponentially stabilizing. As usual in \gls{OFO}, handling output constraints is more complicated; 
we refer to \cite{Haberleverena2021}, \cite{Bernstein2019}, \cite{colot2024optimalpowerflowpursuit} for some possible approaches.

\section{Numerical examples} \label{sec:numerics}

We present two simple numerical examples to illustrate our theoretical results. The code to generate the examples is  available at \url{https://github.com/mattiabianchi/OFO_ECC25.git}

\subsection{Linear plant without input constraints}\label{sec:linearsystem}

We consider the linear time-invariant plant $\dot x = Ax+Bu+B_w w$, $y=Cx$, where $w\in \R$ is an unmeasured disturbance, 
\begin{align*}
	A =\begin{bmatrix}
		-1 & 10 
		\\
		- 10 & -1
	\end{bmatrix},  B = \begin{bmatrix}
		0 \\ 1
	\end{bmatrix},  B_w = \begin{bmatrix} 1 \\ 1 \end{bmatrix},   C = \begin{bmatrix}
		1 & 0
	\end{bmatrix}.
\end{align*} The cost function is given by $\Phi(y,u) = 0.01 u^2+y^2$.

First, assume that $w$ is constant. The matrix $A$ is Hurwitz, and  the plant has steady-state map $s(u) = - A^{-1}(Bu+B_w w)$.  
\Cref{asm:simplifying} holds with $W(x,u) = \|x-s(u)\|^2$, $c_3 =d_3 = \mu_3 = \zeta_3= 1$ (this can be checked by solving the Lyapunov equation for $A$), $\mu_\Phi = 0.02$,  $\ell_{\Phi_u} = 0$, $\ell_{\Phi_y} = 2 \ell_h $ (where $\ell_h = \|CA^{-1}B\|$ is the Lipschitz constant of $h(u)$), $\ell_g = 1$. The bound in \eqref{eq:muBound} gives $\mu_\Phi > 0.0198$ and it is satisfied. Hence, by \Cref{cor:first}, the \gls{OFO} controller   \eqref{eq:OFO} guarantees global exponential stability  and convergence to the  solution of problem \eqref{eq:opt1}, for any value of $\alpha$; for comparison, the result in \cite[Cor.~3.3]{Hauswirthadrian2021} would only ensure stability under the upper bound $\alpha < 10.1$. Note that the presence of the \emph{constant} disturbance  $\omega$ (i.e., the constant offset in $f(x,u)$) does not affect the result in \Cref{cor:first}. Furthermore,  implementing \eqref{eq:OFO} does not require measuring $w$, since $\nabla h(u) = -CA^{-1}B $ is independent of $\omega$. 

Next, to evaluate how the controller can respond to time-varying conditions, we take the disturbance as piecewise constant and switching between the values $w = -10$ and $w = 10$.  The resulting input trajectories, for different values of $\alpha$, are shown in \Cref{fig:1}. Note that the  optimal input $u^\star$ that solves \eqref{eq:opt2} depends on $\omega$, so it is  piecewise constant (as shown by the blue line in \Cref{fig:1}). \Cref{cor:first} ensures exponential stability to the optimal solution in every interval where $w$ is constant.  For small values of $\alpha$, the controller update is  slow and it does not track accurately the optimal input $u^\star$. For $\alpha = 100$, the input quickly converges to the optimum after the disturbance changes. Finally, for larger values of $\alpha$, convergence is retained but oscillatory behavior and overshooting are observed; thus excessively large values for the control gain might be undesirable.

\begin{figure}
	\includegraphics[width=0.95\columnwidth]{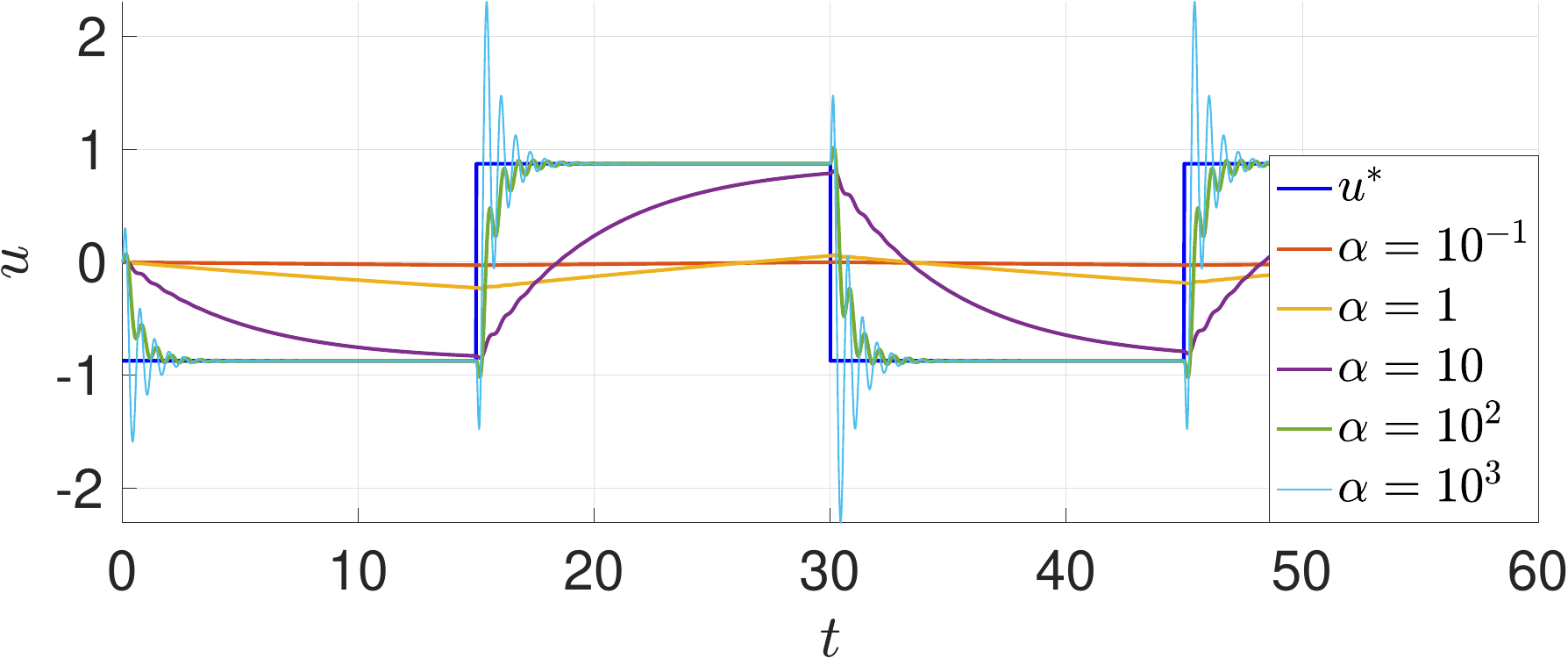}
	\caption{Performance of the \gls{OFO} controller in \eqref{eq:OFO} for the linear plant in \Cref{sec:linearsystem} and different values of $\alpha$.} \label{fig:1}
\end{figure}
\begin{figure}
	\includegraphics[width=0.95\columnwidth]{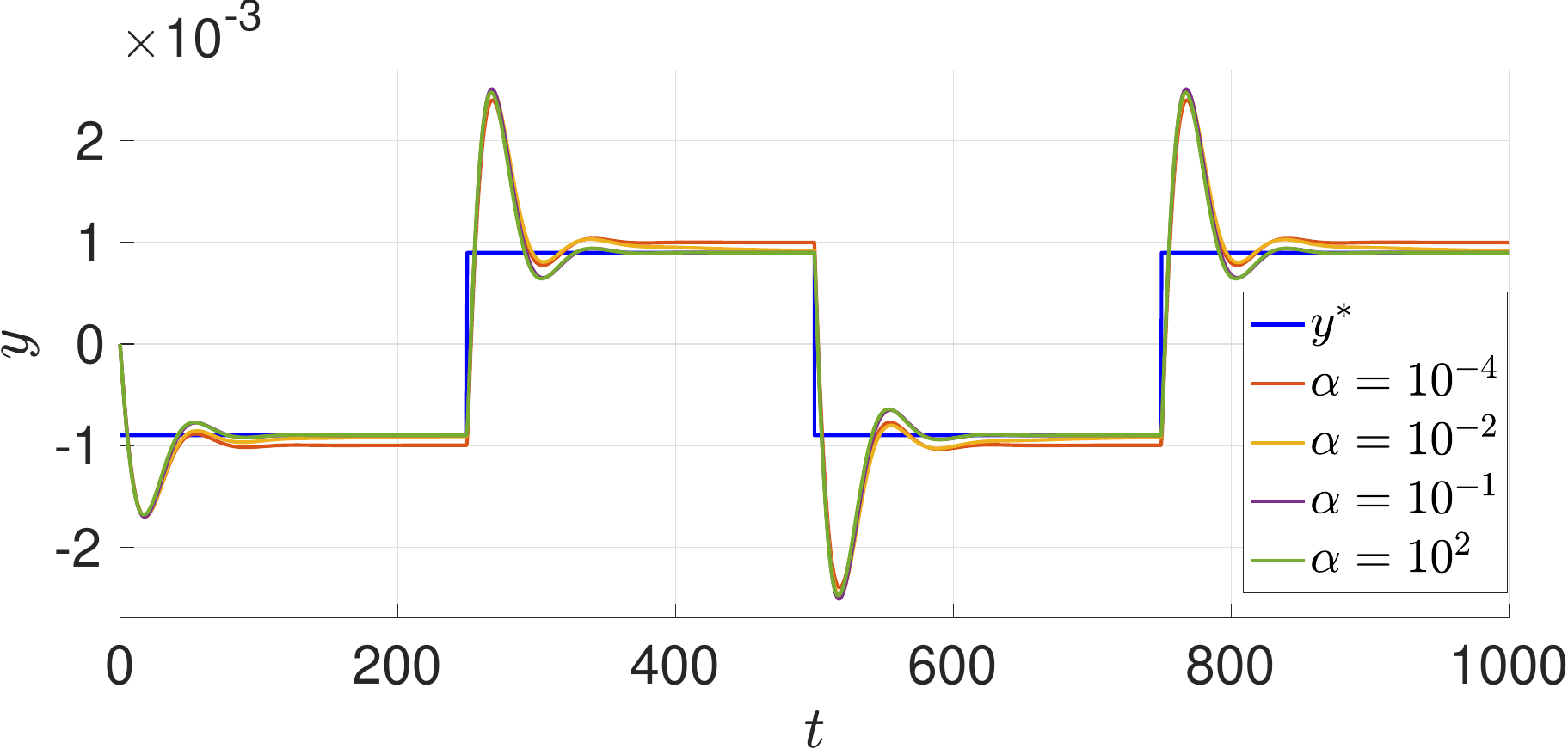}
	\\
	\includegraphics[width=0.95\columnwidth]{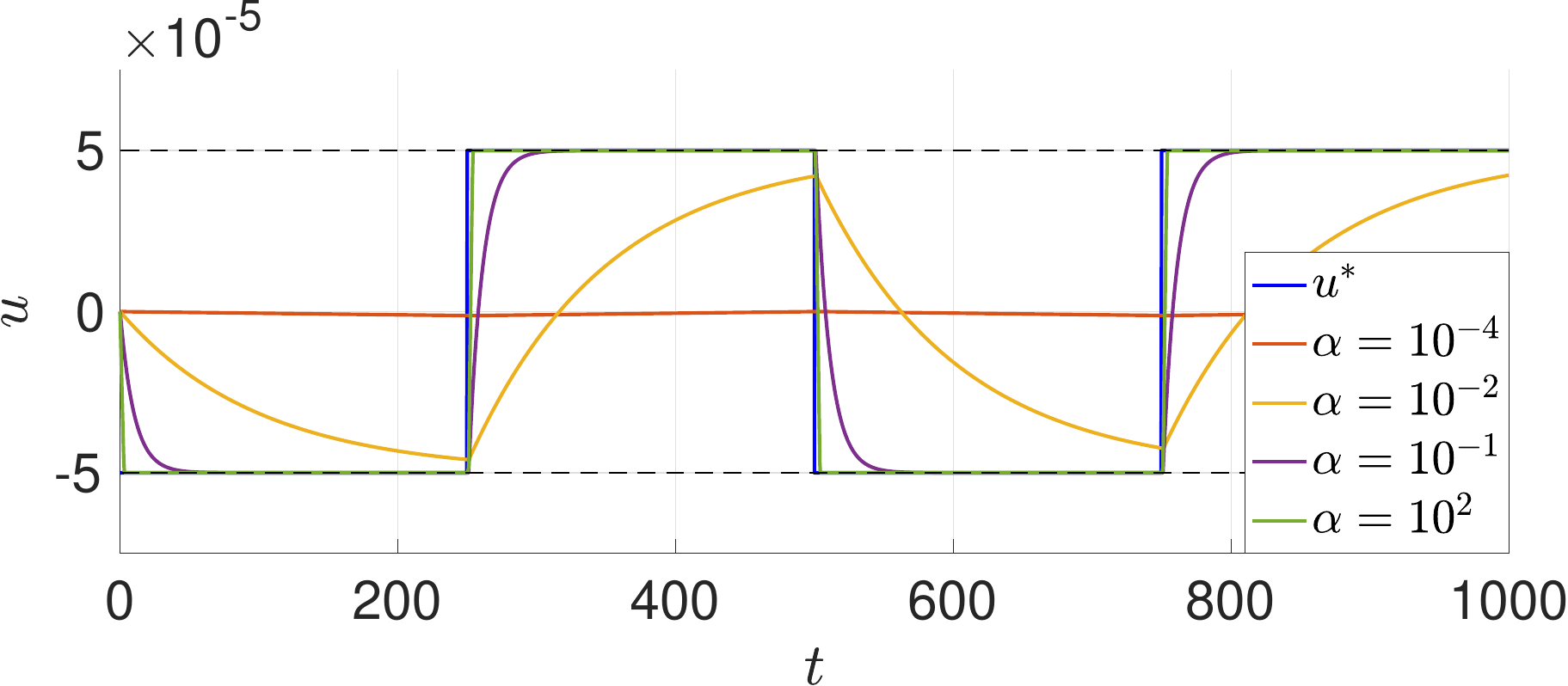}
	\caption{The plant in \Cref{sec:nonlinear} in closed-loop with \eqref{eq:OFOConstrainedSmooth}. Input constraints (dotted lines) are always satisfied.} \label{fig:2}
\end{figure}

\subsection{Nonlinear plant with input constrain}\label{sec:nonlinear}

We consider the plant $\dot x = Ax+ B(u+\operatorname{sin}(u))+B_w w$, $y = Cx$, where $w$ is an unmeasured disturbance,
\begin{align*}
	A = \begin{bmatrix}
		0 & -0.1 \\ 0.1 & -0.1
	\end{bmatrix}, B=\begin{bmatrix}
		0 \\ 0.1
	\end{bmatrix}, B_w = \begin{bmatrix}
		0.1 \\0.1
	\end{bmatrix}, C=\begin{bmatrix}
		1 & 1
	\end{bmatrix}.
\end{align*}
The cost function is given by $\Phi(u,y) = 11 u^2+\sqrt{y^2+1}$. The input is constrained  as $-5\cdot 10^{-5} \leq u \leq 5 \cdot {10^{-5}}$. 

First, assume that $\omega$ is constant. \Cref{asm:core}(a) holds with $\ell_f = 2\|B\|$ (by computing $\nabla_u f(x,u)$, since $\cos(u) \leq 1$), $\ell_g  =\|C\|$.
The matrix $A$ is Hurwitz, hence, for any constant disturbance $w$, the plant has a steady-state map $s(u) = -A^{-1}(Bu+B\operatorname{sin}(u)+B_w w)$. We solve the Lyapunov inequality $A P +PA^\top + 0.045 P \prec 0$. Given the solution  $P = \begin{smallbmatrix}
	0.66 &  0.33 \\ 0.33 & 0.66
\end{smallbmatrix}$, we define the Lyapunov function $W(x,u) = (x-s(u))^\top P (x-s(u))$ in \Cref{asm:simplifying}. Note that $\nabla_x W(x,u)^\top f(x,u) = 2(x-s(u))^\top P(Ax+Bu+B\operatorname{sin}(u) +B_w w) = (x-s(u))^\top (PA + PA^\top)(x-s(u)) \leq - 0.045 (x-s(u))^\top P(x-s(u))$. Thus, \Cref{asm:simplifying}(b) holds with $\mu_3 = 0.45 \lambda_{\min}(P)$, $d_3=\zeta_3 = \lambda_{\max}(P)$, $c_3 = \lambda_{\min}(P)$. Also, $\| \nabla h(u) \| = \| CA^{-1}B(1+\operatorname{cos} u) \| \leq 2\| CA^{-1}B\| \coloneqq \ell_{h}$ is bounded and $\sqrt{y^2+1}$ is $1$-Lipschitz continuous, so \Cref{asm:simplifying}(c) holds with $\ell_{\Phi_y} = \ell_h$. Similarly, $\| \nabla\sqrt{y^2+1} \| = \|y/\sqrt{y^2+1}\| \leq 1$ and $\nabla h(u)$ is $\ell_{\nabla h}$-Lipschitz continuous, with $\ell_{\nabla h} = \|CA^{-1}B\|$; therefore \Cref{asm:simplifying}(d) holds with $\ell_{\Phi_u} = \ell_{\nabla h}$. Finally, $\mu_\Phi = 22$, the bound in \eqref{eq:muBound} is satisfied, and hence the controller \eqref{eq:OFOConstrainedSmooth} guarantees exponential stability of the plant for any value of $\alpha$ (and any \emph{constant} disturbance $\omega$) by \Cref{cor:projected}. 

We test the controller  \eqref{eq:OFOConstrainedSmooth} for different values of $\alpha$, with periodic piecewise constant disturbance, switching between $w = -0.001$ and $w = 0.001$. For smaller values of $\alpha$, the controller is slower in adapting to changes in the disturbance, and the output slower in reaching the optimal steady state. Interestingly, for  larger values of $\alpha$, the \gls{OFO} scheme in \eqref{eq:OFOConstrainedSmooth} approximately behaves as a bang-bang controller, where the input quickly switches  between its lower and upper bounds. 

\section{Conclusion and research directions}\label{sec:conclusion}

We showed that, under suitable assumptions, online feedback optimization guarantees stability without requiring any timescale separation. 
Although we framed our results in continuous-time, discrete-time \gls{OFO} controllers can be studied with analogous assumptions and convergence results. Our analysis can be also extended to tracking  the equilibrium trajectory for  problems with time-varying disturbances and costs, where our results can be beneficial in achieving better tracking guarantees, by allowing for larger control gains.    

Future research should focus on implementing more realistic simulation scenarios, to validate the applicability of our results. Further, in this paper we only focused on the analysis of an existing \gls{OFO} scheme. Developing new control strategies for \gls{OFO}, specifically designed to  avoid the need for timescale separation, and under less restrictive assumptions, is a significant direction for future work.

%

\bibliographystyle{elsarticle-num} 
\bibliography{library}

\end{document}